\documentclass[12pt, final]{l4dc2020} 

% The following packages will be automatically loaded:
% amsmath, amssymb, natbib, graphicx, url, algorithm2e

\makeatletter
\def\set@curr@file#1{\def\@curr@file{#1}} %temp workaround for 2019 latex release
\makeatother
\usepackage[load-configurations=version-1]{siunitx} % newer version

\title[Hamilton--Jacobi--Bellman Equations for Q-Learning]{Hamilton-Jacobi-Bellman Equations for Q-Learning\\ in Continuous Time}
\usepackage{times}
\usepackage{mathrsfs,enumerate}
\usepackage{amsfonts,amssymb,amscd,bbm}
\usepackage{float}
\usepackage{caption}
\usepackage{bm}
\usepackage{algorithm2e}

% Use \Name{Author Name} to specify the name.
% If the surname contains spaces, enclose the surname
% in braces, e.g. \Name{John {Smith Jones}} similarly
% if the name has a "von" part, e.g \Name{Jane {de Winter}}.
% If the first letter in the forenames is a diacritic
% enclose the diacritic in braces, e.g. \Name{{\'E}louise Smith}

% Two authors with the same address
% \coltauthor{\Name{Author Name1} \Email{abc@sample.com}\and
%  \Name{Author Name2} \Email{xyz@sample.com}\\
%  \addr Address}

% Three or more authors with the same address:
% \coltauthor{\Name{Author Name1} \Email{an1@sample.com}\\
%  \Name{Author Name2} \Email{an2@sample.com}\\
%  \Name{Author Name3} \Email{an3@sample.com}\\
%  \addr Address}

% Authors with different addresses:
\author{%
 \Name{Jeongho Kim} \Email{jhkim206@snu.ac.kr}\\
 \addr Institute of New Media and Communications, Seoul National University, Seoul  08826, South Korea
 \AND
 \Name{Insoon Yang} \Email{insoonyang@snu.ac.kr}\\
 \addr Department of Electrical and Computer Engineering, Automation and System Research Institute, Seoul National University, Seoul 08826, South Korea
}

\newcommand{\bbr}{\mathbb R}

%%%%%%%%%%%%%JH%%%%%%%%%%%%%%%%
\def \bx {{\bm{x}}}
\def \bu {{\bm{u}}}
\def \bz {{\bm{z}}}
\def \bp {{\bm{p}}}
\def \bq {{\bm{q}}}

\def \d  {{\rm{d}}}
\def \e  {{\varepsilon}}

\begin{document}

\maketitle

\begin{abstract}%
 In this paper, we introduce Hamilton--Jacobi--Bellman (HJB) equations for Q-functions in continuous-time optimal control problems with Lipschitz continuous controls. 
 The standard Q-function used in reinforcement learning is shown to be the unique viscosity solution of the HJB equation. 
 A necessary and sufficient condition for optimality is provided using the viscosity solution framework. 
 By using the HJB equation, we develop a Q-learning method for continuous-time dynamical systems. 
 A DQN-like algorithm is also proposed for high-dimensional state and control spaces. 
 The performance of the proposed Q-learning algorithm is demonstrated using 1-, 10- and 20-dimensional dynamical systems.
\end{abstract}

\begin{keywords}%
  Hamilton-Jacobi-Bellman equation, Optimal control, Q-learning, Reinforcement learning, Deep Q-Networks.%
\end{keywords}

\section{Introduction}\label{sec:1}

Q-learning is one of the most popular reinforcement learning methods that seek efficient control policies without the knowledge of an explicit system model~\cite{Watkins1992}. 
The key idea of Q-learning is to combine  dynamic programming and stochastic approximation 
in a way to estimate
the optimal state-action value function, also called the \emph{Q-function}, by using  trajectory samples. 
For discrete-time Markov decision processes, Q-learning has been extensively studied~(see~\cite{Bertsekas2019, Matni2019} and the references therein), 
while the literature on continuous-time Q-learning is  sparse. 
In discrete time, the Bellman equation for Q-functions can be defined by using dynamic programming in a straightforward manner.
However, the corresponding Bellman equation for continuous-time Q-functions has not yet been fully characterized despite some prior attempts using HJB equations~\cite{Doya2000, Munos2000}. A variant of Q-function is used in \cite{LeeParkChoi12, Mehta2009}, which has a different meaning from the Q-function in reinforcement learning. In other literature, a Q-function similar to that of reinforcement learning was introduced, but with function-valued control input \cite{Palanisamy2015} or heavily utilizing the linear-time-invariant (LTI) system structure \cite{Vamvoudakis2017}. A similar model-free approach for  LTI systems has been also studied in \cite{jiang2012computational,vrabie2009adaptive}, although an associated Q-function is not specifically defined. 
A continuous-time Q-function was also considered to prove the convergence of stochastic approximation \cite{devraj2017zap}. 

In this paper, we consider continuous-time deterministic optimal control problems with Lipschitz continuous controls. 
We show that the associated Q-function corresponds to the unique viscosity solution of a Hamilton--Jacobi--Bellman (HJB) equation in a particular form.
In the viscosity solution framework, even when it is not differentiable,  the Q-function can be used to verify the optimality of a given control and to design an optimal control strategy.  
We use the proposed HJB equation to derive an integral equation that the optimal Q-function and optimal control trajectories should satisfy. Based on this equation, we propose a Q-learning algorithm for continuous-time dynamical systems. 
For high-dimensional state and control spaces, we also propose a DQN-like algorithm by using deep neural networks (DNNs) as a function approximator~\cite{Mnih15}.
This opens a new avenue of research that connects viscosity solution theory for HJB equations and Q-learning domain.
The performance of the proposed Q-learning algorithm is tested through a set of numerical experiments with 1-, 10- and 20-dimensional  systems.

\section{Continuous-Time Q-Functions and HJB Equations}

Consider a controlled dynamical system of the form
\begin{equation}\label{dyn-con}
\dot{x}(t)=f(x(t),u(t)),\quad t>0,
\end{equation}
where $x(t)\in \bbr^n$ is the system state and $u(t)\in \bbr^m$ is the control input. Let $\mathbb{U} := \{ u: [0, T] \to \bbr^m \mid \mbox{$u$ measurable}\}$ be the set of admissible controls.
The standard finite-horizon optimal control problem can be formulated as
\begin{equation}\label{opt}
\inf_{u\in \mathbb{U}_1} J_{\bx}(u):=\inf_{u\in\mathbb{U}_1}\left\{\int_0^T r(x(t),u(t))\,\d t+q(x(T))\right\}
\end{equation}
with $x(0)=\bx$, where $r:\bbr^n\times\bbr^m\to \bbr$ and $q:\bbr^n\to\bbr$ are running and terminal cost functions of interest, respectively, and $\mathbb{U}_1$  is a subset of $\mathbb{U}$.
The Q-function $Q:\bbr^n\times\bbr^m\times[0,T]\to \bbr$ of \eqref{opt} is defined by
\begin{equation}\label{Q-function}
Q(\bx,\bu,t):=\inf_{u\in\mathbb{U}_1} \left\{\int_t^T r(x(s),u(s))\,\d s+q(x(T))~\Big|~x(t)=\bx,u(t)=\bu\right\},
\end{equation}
which represents the minimal cost incurred from time $t$ to $T$ when starting from $x(t) = \bm{x}$ with $u(t) = \bm{u}$.
In particular, when $\mathbb{U}_1 = \mathbb{U}$, the Q-function reduces to the standard optimal value function  $v:\bbr^n\times[0,T]\to\bbr$, defined by 
\begin{equation}\label{value}
v(\bx,t):=\inf_{u\in \mathbb{U}} \left\{\int_t^T r(x(s),u(s))\,\d s+q(x(T))~\Big|~x(t)=\bx\right\}.
\end{equation}

\begin{proposition}\label{P2.1}
	Suppose that $\mathbb{U}_1=\mathbb{U}$. Then, the Q-function \eqref{Q-function} corresponds to $v$ for each $\bm{u} \in \bbr^m$, i.e., $Q(\bx,\bu,t)=v(\bx,t)$ for all $(\bx, \bu, t) \in  \bbr^n \times \bbr^m \times [0,T]$.
\end{proposition}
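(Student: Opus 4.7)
The plan is to prove the two inequalities $Q(\bx,\bu,t)\ge v(\bx,t)$ and $Q(\bx,\bu,t)\le v(\bx,t)$ separately, exploiting the fact that admissible controls are only required to be measurable, so altering a control on a set of Lebesgue measure zero changes neither the cost integral nor the state trajectory.

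First I would observe the easy direction $Q(\bx,\bu,t)\ge v(\bx,t)$. The infimum in \eqref{Q-function} is taken over the strictly smaller set $\{u\in\mathbb{U}:u(t)=\bu\}$, whereas the infimum in \eqref{value} is taken over all of $\mathbb{U}$. Since each admissible trajectory for $Q(\bx,\bu,t)$ is also admissible for $v(\bx,t)$ with the same cost, taking infima preserves the inequality.

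For the reverse direction $Q(\bx,\bu,t)\le v(\bx,t)$, I would fix an arbitrary $\e>0$ and pick an $\e$-optimal control $u^\e\in\mathbb{U}$ for the value function, i.e.\ one satisfying $\int_t^T r(x^\e(s),u^\e(s))\,\d s+q(x^\e(T))\le v(\bx,t)+\e$, where $x^\e$ is the associated trajectory with $x^\e(t)=\bx$. I then define the modified control $\tilde u^\e(s):=u^\e(s)$ for $s\ne t$ and $\tilde u^\e(t):=\bu$. Since $\tilde u^\e$ differs from $u^\e$ only on the singleton $\{t\}$, which has Lebesgue measure zero, $\tilde u^\e$ is still measurable, hence $\tilde u^\e\in\mathbb{U}$, and it satisfies $\tilde u^\e(t)=\bu$ by construction.

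The crux is then to show that the cost is unchanged. For the cost functional, modifying the control on a null set leaves the Lebesgue integral $\int_t^T r(x(s),u(s))\,\d s$ unchanged, provided the state trajectory is also unchanged. The state trajectory is the Carath\'eodory solution of $\dot{x}(s)=f(x(s),u(s))$ written as the integral equation $x(s)=\bx+\int_t^s f(x(\tau),u(\tau))\,\d\tau$, and altering $u$ on a null set leaves this integral unchanged, so $x^\e$ and the trajectory $\tilde x^\e$ generated by $\tilde u^\e$ coincide. Therefore $J_{\bx,t}(\tilde u^\e)=J_{\bx,t}(u^\e)\le v(\bx,t)+\e$, which yields $Q(\bx,\bu,t)\le v(\bx,t)+\e$; letting $\e\downarrow 0$ finishes the proof. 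The only mild technicality, not really an obstacle, is invoking uniqueness of the Carath\'eodory solution under whatever standing regularity (e.g.\ Lipschitz) on $f$ is in force; once that is available, the argument is purely a measure-theoretic observation about the irrelevance of the single-point constraint $u(t)=\bu$.
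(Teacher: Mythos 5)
Your proposal is correct and follows essentially the same route as the paper: both take an $\e$-optimal control for $v(\bx,t)$, modify it at the single point $t$ to force $u(t)=\bu$, and note that a null-set modification changes neither the Carath\'eodory trajectory nor the cost integral. You merely spell out the measure-theoretic details (measurability of the modified control, invariance of the integral equation) that the paper compresses into one sentence.
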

\begin{proof}
	Fix $(\bx, \bu, t) \in  \bbr^n \times \bbr^m \times [0,T]$.
	Let $\e$ be an arbitrary positive constant. Then, there exists $u\in \mathbb{U}$ such that $\int_t^Tr(x(s),u(s))\,\d s+q(x(T))<v(\bx,t)+\e$,
	where $x(s)$ satisfies \eqref{dyn-con} with $x(t)=\bx$ in the Carath\'eodory sense: $x(s)=\bx+\int_t^s f(x(\tau),u(\tau))\,\d \tau$.
	We now construct a new control $\tilde{u}\in\mathbb{U}_1=\mathbb{U}$ as
	$\tilde{u}(s):= \bu$ if $s=t$; $\tilde{u}(s):= u(s)$ if $s > t$.
	%	\[\tilde{u}(s):=\begin{cases}
	%	\bu \quad &\mbox{if}\quad s=t,\\
	%	u(s) \quad &\mbox{if}\quad s>t.
	%	\end{cases}\]
	Such a modification of controls at a single point does not affect the trajectory or the total cost. Therefore, we have
	\[
	v(\bx,t)\le Q(\bx,\bu,t) \leq \int_t^Tr({x}(s), \tilde{u}(s))\,\d s+q({x}(T)) <v(\bx,t)+\e.
	\]
	Since $\e$ was arbitrary, we conclude that $v(\bx,t)=Q(\bx,\bu,t)$ for any $\bu\in \bbr^m$. 
\end{proof}

Thus, if $\mathbb{U}_1$ is chosen to be the entire set of measurable function $\mathbb{U}$, 
the Q-function has no additional interesting property. 
Motivated by this observation, 
we restrict the control to be a  Lipschitz continuous function. Since any Lipschitz continuous function is differentiable almost everywhere,  we define the set of admissible controls $\mathbb{U}_1\subset \mathbb{U}$ as $\mathbb{U}_1:=\left\{u\in \mathbb{U} \mid \left\|\dot{u}\right\|_{L^\infty}\le M ~ \mbox{a.e.}\right\}$, where $M$ is a fixed constant. Then, for any $u\in \mathbb{U}_1$, there exists a unique measurable function $a:[0,T]\to \bbr^m$ with $|a(t)|\le M$ such that the following ODE holds a.e.:
$\dot{u}(t)=a(t)$, $0\le t\le T$.
{Thus, from now on, we will focus on the optimal control problem \eqref{opt} when the control $u$ is Lipschitz continuous such that $|\dot{u}(t)|\le M$ a.e.}
By using  the dynamic programming principle, we can deduce that
\begin{align}
\begin{aligned}\label{DPP}
Q(\bx,\bu,t)
&=\inf_{u\in \mathbb{U}_1} \left\{\int_t^{t+h} r(x(s),u(s))\,\d s+ Q(x(t+h),u(t+h),t+h) \Big|  x(t)=\bx,u(t)=\bu\right\}.
\end{aligned}
\end{align}
To derive the Hamilton-Jacobi equation that the Q-function satisfies, suppose for a moment that $Q\in C^1(\bbr^n\times\bbr^m\times[0,T])$. We will discard this regularity assumption on $Q$ by introducing the viscosity solution framework in Section~\ref{sec:vis}. 
Then, the Taylor expansion of $Q$ in \eqref{DPP} yields $
\inf_{u\in \mathbb{U}_1}\left\{\frac{1}{h}\int_t^{t+h}r(x(s),u(s))\,\d s+\partial_t Q+\nabla_\bx Q\cdot f(\bx,\bu)+\nabla_\bu Q \cdot \dot{u}(t)+O(h)\right\}=0$. Letting $h$ tend to zero, we arrive at the following HJB equation for the Q-function:
\[
\partial_t Q +\nabla_\bx Q \cdot f(\bx,\bu)+\inf_{\mathbf{a} \in \mathbb{R}^m,|\mathbf{a}|\le M}\{\nabla_\bu Q\cdot \mathbf{a}\}+r(\bx,\bu)=0.
\]
Note that $\mathbf{a}^* =-M{\nabla_\bu Q}/{|\nabla_\bu Q|}$ minimizes the Hamiltonian, and thus the HJB equation can be expressed as
\begin{equation}\label{HJB-Q}
\partial_t Q+\nabla_\bx Q\cdot f(\bx,\bu)-M|\nabla_\bu Q|+r(\bx,\bu)=0.
\end{equation}
In what follows, we uncover several mathematical properties of the HJB equation \eqref{HJB-Q} and the Q-function.

\subsection{Viscosity Solution: Existence and Uniqueness}\label{sec:vis}

In general, the Q-function is not a $C^1$-function. 
As a weak solution of the HJB equation, we use the framework of \emph{viscosity solutions}~\cite{CrandallEvansLions84, CrandallLions83}.
We begin by defining the viscosity solution of \eqref{HJB-Q} in the following standard manner~\cite{Bardi97, Evans2010}:

\begin{definition}\label{D2.1}
	A continuous function $Q:\bbr^n\times \bbr^m\times[0,T]\to \bbr$ is a \emph{viscosity solution} of \eqref{HJB-Q} if
	\begin{enumerate}
		\item $Q(\bx,\bu,T)=q(\bx)$ for all $\bu\in \bbr^m$.
		\item For any $R\in C^1(\bbr^n\times\bbr^m\times(0,T))$, if $Q-R$ has a local maximum at  $(\bx_0,\bu_0,t_0)$, then $\partial_tR(\bx_0,\bu_0,t_0)+\nabla_\bx R(\bx_0,\bu_0,t_0)\cdot f(\bx_0,\bu_0)-M|\nabla_\bu R(\bx_0,\bu_0,t_0)|+r(\bx_0,\bu_0)\ge0$.
		\item For any $R\in C^1(\bbr^n\times\bbr^m\times(0,T))$, if $Q-R$ has a local minimum at  $(\bx_0,\bu_0,t_0)$, then $\partial_t R(\bx_0,\bu_0,t_0)+\nabla_\bx R(\bx_0,\bu_0,t_0)\cdot f(\bx_0,\bu_0)-M|\nabla_\bu R(\bx_0,\bu_0,t_0)|+r(\bx_0,\bu_0)\le0$.
	\end{enumerate}
\end{definition}
From now on, we assume the following regularity conditions on $f$, $r$ and $q$:\footnote{These assumptions can be relaxed by using a modulus associated with each function as in \cite[Chapter III.1--3]{Bardi97}.}
\begin{itemize}
	\item $(A1)$ The functions $f$, $r$ and $q$ are bounded: $\|f\|_{L^\infty}+\|r\|_{L^\infty}+\|q\|_{L^\infty}<C$.
	
	\item $(A2)$ The functions $f$, $r$ and $q$ are Lipschitz continuous: $\|f\|_{\textup{Lip}}+\|r\|_{\textup{Lip}}+\|q\|_{\textup{Lip}}<C$, where $\|\cdot\|_{\textup{Lip}}$ is a Lipschitz constant of argument.
\end{itemize}
Then, the HJB equation~\eqref{HJB-Q} has a unique viscosity solution, which corresponds to the Q-function.
\begin{theorem}\label{T1}
	The  Q-function~\eqref{Q-function} is the unique viscosity solution of the HJB equation \eqref{HJB-Q}. Moreover, it is a bounded and Lipschitz continuous function.
\end{theorem}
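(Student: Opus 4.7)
The plan is to lift the control $\bu$ to a state variable so that Theorem~\ref{T1} falls under the classical deterministic-control/HJB framework. Specifically, I would define the augmented state $\bz = (\bx,\bu)\in\bbr^{n+m}$, control $a\in A:=\{a\in\bbr^m : |a|\le M\}$, and augmented dynamics $\dot{\bz} = F(\bz,a) := (f(\bx,\bu),\,a)$. Every $u\in\mathbb{U}_1$ corresponds to a measurable $a:[0,T]\to A$ together with the initial value $u(t)$, so that the Q-function~\eqref{Q-function} coincides with the value function of a standard finite-horizon control problem with running cost $r(\bz)$, terminal cost $q(\bx)$, and compact control set $A$. The identity $\inf_{a\in A}\nabla_\bu Q\cdot a = -M|\nabla_\bu Q|$ then matches the associated HJB equation with~\eqref{HJB-Q} exactly.

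With this reformulation in hand, I would first establish boundedness and Lipschitz continuity of $Q$. Boundedness is immediate from (A1): $|Q(\bx,\bu,t)|\le (T-t)\|r\|_\infty + \|q\|_\infty$. For Lipschitz continuity in $(\bx,\bu)$, I would take two initial pairs, fix an $\e$-optimal admissible $a(\cdot)$ for one of them, apply the same $a(\cdot)$ to the other with the initial $\bu$-value appropriately set, and use Gr\"{o}nwall's inequality together with $\|f\|_{\textup{Lip}}\le C$ to bound the trajectory gap linearly in $|\bx_1-\bx_2|+|\bu_1-\bu_2|$; the cost gap is then controlled via $\|r\|_{\textup{Lip}}$ and $\|q\|_{\textup{Lip}}$. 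Continuity in $t$ follows from a direct estimate of the short-interval cost using (A1).

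Next, I would verify the viscosity-solution properties. The terminal condition in Definition~\ref{D2.1} is immediate. At a test point $(\bx_0,\bu_0,t_0)$ where $Q-R$ attains a local maximum, the DPP~\eqref{DPP} applied with any constant control $a\in A$, together with the Taylor expansion of $R$ along the trajectory $(\bz_0 + hF(\bz_0,a),\, t_0+h)$, yields $\partial_t R + \nabla_\bx R\cdot f + \nabla_\bu R\cdot a + r \ge 0$ in the limit $h\to 0^+$; infimizing over $a$ produces item~2. For item~3, I would use an $\e_h$-optimal control in~\eqref{DPP} with $\e_h = o(h)$, apply the chain-rule identity for $R$ along the resulting trajectory, and invoke the pointwise bound $\nabla_\bu R\cdot a(s) \ge -M|\nabla_\bu R|$ to conclude. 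Uniqueness then follows from the classical comparison principle for first-order HJB equations (see \cite[Chapter III.3]{Bardi97}): the Hamiltonian $H(\bz,\bp,\bq) := \bp\cdot f(\bx,\bu) - M|\bq|$ satisfies the structural estimate $|H(\bz_1,\bp,\bq)-H(\bz_2,\bp,\bq)| \le C|\bz_1-\bz_2|\,(1+|\bp|)$, and the terminal data $q(\bx)$ is bounded and continuous.

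The main obstacle is more conceptual than technical: one must recognize that the non-standard ``control-of-control'' formulation, together with the non-smooth Hamiltonian term $-M|\nabla_\bu Q|$, can be recast as a textbook control problem via the lifting $\bu\mapsto\bz=(\bx,\bu)$. Once this reformulation is accepted, the only delicate technical point is verifying that the non-differentiability of $\bq\mapsto -M|\bq|$ does not break the comparison principle; this is handled because the map is $M$-Lipschitz and concave, so the standard doubling-of-variables argument applies without modification, yielding uniqueness within the class of bounded continuous solutions.
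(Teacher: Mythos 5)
Your proposal is correct and follows essentially the same route as the paper: the central idea in both is to lift the control to a state variable via $\bz=(\bx,\bu)$ with augmented dynamics $F(\bz,a)=(f(\bx,\bu),a)$ and compact control set $\{|a|\le M\}$, after which the structural (Lipschitz) estimates on the Hamiltonian put the problem squarely in the classical viscosity-solution framework. The only difference is one of exposition --- you sketch the DPP-based verification of the sub/supersolution properties, the Gr\"{o}nwall-based Lipschitz bound, and the comparison argument explicitly, whereas the paper delegates exactly these standard steps to \cite{Evans2010} and \cite{Bardi97}.
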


\begin{proof}
	To accommodate the Lipschitz continuity constraint on controls, we consider an augmented system $\dot{x}(t)=f(x(t),u(t)),\dot{u}(t)=a(t)$ with $|a(t)|\le M$, where $u(t)$ and $a(t)$ are  interpreted as a new state and a new input, respectively. 
	Let $z(t) := (x(t), u(t))$ be the augmented state, and $F(\bz, {\bm a}) := (f(\bx, \bu), {\bm a})$ be the augmented vector field. 
	Then, the Q-function can be expressed as $Q(\bz,t)=\inf_{|a|\le M} \left\{\int_t^T r(z(s))\,\d s +\tilde{q}(z(T))~\Big|~z(t)=\bz\right\}$, where $\tilde{q}(\bz)=q(\bx)$.
	The HJB equation \eqref{HJB-Q} can be rewritten as $\partial_t Q+ H(\nabla_\bz Q,\bz)=0$ with $Q(\bm{z}, T) \equiv \tilde{q}(\bm{z})$, where the Hamiltonian $H=H(\bp,\bz)$ is defined by
	$H(\bp,\bz) :=H(\bp_1,\bp_2,\bz_1,\bz_2)=\bp_1\cdot f(\bz_1,\bz_2)-M|\bp_2|+r(\bz_1,\bz_2)$.
	By the assumptions (A1) and (A2), we have
	\begin{align*}
	|H(\bp,\bz)-H(\bq,\bz)|&\le|\bp_1-\bq_1||f(\bz_1,\bz_2)|+M|\bp_2-\bq_2| \le (M+\|f\|_{L^\infty})|\bp-\bq|,\\
	|H(\bp,\bz)-H(\bp,\bm{y})|& \le \left(|\bp|\|f\|_{\textup{Lip}}+\|r\|_{\textup{Lip}}\right)|\bz-\bm{y}|.
	\end{align*}
	These imply that the Hamiltonian satisfies the Lipschitz continuity conditions, and thus the standard proof for the existence and the uniqueness of viscosity solution can be directly used  (e.g.,  \cite{Evans2010}).
	Furthermore,  by \cite{Bardi97}, the Q-function corresponds to the unique viscosity solution. The boundedness and the Lipschitz continuity of the Q-function can be proved as in \cite{Evans2010}.
\end{proof}

\subsection{Asymptotic Consistency}

We now discuss the convergence of the Q-function to the optimal value function  as the Lipschitz constant $M$ tends to $\infty$. This convergence property demonstrates that the proposed HJB framework is asymptotically consistent with our observation in Proposition~\ref{P2.1}.
We parametrize the Q-function by $\e:= {1}/{M}$ so that the scaling becomes similar to that of the other classical singular limit problem.
More precisely, let
\begin{equation}\label{Q-para}
Q^\e(\bx,\bu,t) :=\inf_{u\in \mathbb{U}_1^\e}\left\{\int_t^T r(x(s),u(s))\,\d s+q(x(T))~\Big|~x(t)=\bx,~u(t)=\bu\right\},
\end{equation}
where
$\mathbb{U}_1^\e:= \{u\in \mathbb{U} \mid  \|\dot{u} \|_{L^\infty}\le 1/{\e},~ \mbox{a.e.} \}.$
 Since $\mathbb{U}_1^{\e_1}\subseteq \mathbb{U}_1^{\e_2}$ for any $\e_1 \geq \e_2 > 0$, it is straightforward to observe that $Q^{\e_1}(\bx,\bu,t)\ge Q^{\e_2}(\bx,\bu,t)$. 
We also notice that $Q^\epsilon$ is a bounded function under Assumption (A1).
Therefore, by the monotone convergence theorem,
there exists a limit function $Q^0(\bx,\bu, t)$ such that
\[Q^0(\bx,\bu,t) =\lim_{\e\to0}Q^\e(\bx,\bu,t) \quad \forall (\bx, \bu, t) \in \mathbb{R}^n \times \bbr^m \times [0,T].
\]
The limit function corresponds to the optimal value function~\eqref{value} without the Lipschitz continuity constraint on controls.

\begin{theorem}\label{T3}
	For any $(\bx, \bu, t) \in \bbr^n \times \bbr^m \times [0,T]$, we have $Q^0(\bx,\bu,t)=v(\bx,t)$.
\end{theorem}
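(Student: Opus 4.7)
The plan is to prove the identity by establishing $Q^0 \ge v$ and $Q^0 \le v$ separately. The direction $Q^0 \ge v$ is immediate, since $\mathbb{U}_1^\e \subseteq \mathbb{U}$ gives $Q^\e(\bx,\bu,t) \ge v(\bx,t)$ for every $\e > 0$, and the monotone limit preserves the inequality. For the reverse direction I would show that, for each $\eta > 0$ and each small $\e > 0$, there exists an admissible Lipschitz control $u_\e \in \mathbb{U}_1^\e$ satisfying $u_\e(t) = \bu$ whose associated cost is within $\eta$ of $v(\bx,t)$; letting $\e \to 0$ and then $\eta \to 0$ then yields $Q^0 \le v$.

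The construction proceeds in two steps. First, pick a near-optimal measurable $u^\star \in \mathbb{U}$ whose cost lies below $v(\bx,t) + \eta/3$, and reduce to a bounded near-optimal control by truncation: set $u^{\star,K} := u^\star \mathbf{1}_{|u^\star| \le K}$. Using the boundedness of $f$ (which forces $|f(x^{\star,K},u^{\star,K}) - f(x^\star,u^\star)|$ to be dominated uniformly) together with pointwise convergence, dominated convergence, and a Gronwall argument on the trajectory error, the cost of $u^{\star,K}$ converges to that of $u^\star$ as $K \to \infty$; so I may assume $\|u^\star\|_{L^\infty} \le K$ at the expense of another $\eta/3$. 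Second, regularize: let $\tilde u_\delta := \rho_\delta \ast u^\star$ be a standard mollification of width $\delta$ (after extending $u^\star$ by a constant past the endpoints), which is smooth with $\|\dot{\tilde u}_\delta\|_{L^\infty} = O(K/\delta)$. To enforce $u_\e(t) = \bu$, define $u_\e$ as a linear ramp from $\bu$ to $\tilde u_\delta(t+h)$ on $[t, t+h]$ and as $\tilde u_\delta$ on $[t+h, T]$; this ramp has Lipschitz constant $O((|\bu|+K)/h)$, so choosing $\delta$ and $h$ both of order $\e$ places $u_\e$ in $\mathbb{U}_1^\e$. The ramp contributes at most $h\,\|r\|_{L^\infty} = O(\e)$ to the cost, and by Gronwall plus the Lipschitz continuity of $r$ and $q$, the remainder of the cost error vanishes as $\delta \to 0$.

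The main obstacle is juggling three constraints at once in the construction: the $1/\e$ Lipschitz bound on $\dot u_\e$, the pinning $u_\e(t) = \bu$, and $L^1$-closeness to the bounded near-optimal control on $[t+h, T]$. The preliminary truncation step is what converts a generic measurable $u^\star$ into a regularizable bounded function, and the two-scale choice $\delta \sim h \sim \e$ is what balances the mollifier's Lipschitz constant against the boundary matching. Once these parameters are fixed, the remaining estimates are routine uses of the Lipschitz and boundedness assumptions (A1) and (A2).
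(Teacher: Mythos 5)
Your proof is correct, but it is genuinely different from what the paper does: the paper offers no argument at all for Theorem~\ref{T3}, instead deferring to the classical asymptotic result in Bardi--Capuzzo-Dolcetta (Theorem~4.1, Ch.~7), which treats this kind of singular limit within the viscosity-solution machinery developed for the augmented system. You instead give a direct, variational proof: the inequality $Q^0 \ge v$ is immediate from set inclusion and monotonicity, and the reverse inequality is established by explicitly manufacturing, for each $\e$, a control in $\mathbb{U}_1^\e$ pinned at $u(t)=\bu$ whose cost nearly matches $v(\bx,t)$ --- truncate a near-optimal measurable control (where your use of the uniform bound $2\|f\|_{L^\infty}$ on the bad set $\{|u^\star|>K\}$, whose measure vanishes as $K\to\infty$, is exactly the right move, since $u^\star$ need not be integrable), mollify at scale $\delta$, and attach a linear ramp of width $h$ to meet the boundary condition, with the two-scale choice $\delta \sim h \sim \e$ keeping the Lipschitz constant below $1/\e$ while the ramp and mollification errors vanish by Gronwall and (A1)--(A2). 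What your route buys is self-containedness: it uses nothing beyond Gronwall, truncation, and mollification, and it makes transparent \emph{why} the constraint becomes vacuous in the limit (any measurable control is $L^1$-approximable by $1/\e$-Lipschitz ones at negligible cost); what the cited route buys is generality and brevity, handling the limit at the level of value functions or PDEs without bespoke constructions. Two cosmetic points you should make explicit in a final write-up: the case $t=T$ is trivial (both sides equal $q(\bx)$), and for $t<T$ one needs $h \le T-t$, which holds once $\e$ is small; neither affects the argument.
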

\begin{proof}
Since the argument in the proof of \cite[Theorem 4.1 in Ch. 7]{Bardi97} can be used, 
we omit the proof.
\end{proof}

\subsection{Optimal Controls}

To characterize a necessary and sufficient condition for optimality of a control $u \in \mathbb{U}_1$, we consider the function $g_{u}(s;\bx,\bu,t):=\int_t^s r(x(\tau),u  (\tau))\,\d \tau +Q(x(s),u  (s),s)$, with $ x(t)=\bx,u(t)=\bu$. By~\eqref{DPP}, we deduce that the control $u$ is optimal if and only if
$s \mapsto g_u (s; \bx, \bu, t)$ is a constant function for each $(\bx, \bu, t)$. 
On the other hand, the dynamic programming principle implies that the function $s \mapsto g_u(s;\bx,\bu,t)$ is non-decreasing for any control $u\in \mathbb{U}_1$.
Thus, the control $u$ is optimal if and only if the function
$s\mapsto g_u(s;\bx,\bu,t)$ is non-increasing.
If the Q-function is differentiable, this implies $\frac{\d}{\d s}g_u(s;\bx,\bu,t)=r(x(s),u(s))+\nabla_\bx Q\cdot f(x(s),u(s))+\nabla_\bu Q \cdot \dot{u} +\partial_t Q \le 0$.
Since Q-function satisfies HJB equation \eqref{HJB-Q}, we have
\begin{align*}
0&= \partial_tQ+\nabla_\bx Q\cdot f(x(s),u(s))-M|\nabla_\bu Q|+r(x(s),u(s))\\
&\le \partial_t Q+\nabla_\bx Q\cdot f(x(s),u(s))+\nabla_\bu Q\cdot\dot{u}+r(x(s),u(s))\le0.
\end{align*}
Therefore, when $Q$ is differentiable,
$u \in \mathbb{U}_1$ is optimal if and only if $\dot{u}=-M {\nabla_\bu Q}/{|\nabla_\bu Q|}$ with $u(t)=\bu$.
To obtain the rigorous principle of optimality when $Q$ is not differentiable, we use generalized derivatives of the Q-function, namely, sub- and superdifferntials of $Q$.
%We define the \emph{lower and upper Dini derivative} of Lipschitz continuous function $Q$ at point $(\bx,\bu,t)$ with the direction $\bq=(q_0,q_1,q_2)\in \bbr^{n+m+1}$ by
%\begin{align*}
%	&\partial^- Q(\bx,\bu,t;\bq):=\liminf_{h\to0}\frac{Q((\bx,\bu,t)+h \bq)-Q(\bx,\bu,t)}{t},\\
%	&\partial^+ Q(\bx,\bu,t;\bq):=\limsup_{h\to0}\frac{Q((\bx,\bu,t)+h \bq)-Q(\bx,\bu,t)}{t}.
%\end{align*}
%Moreover, the sub- and super differentials of $Q$ can be represented as 
%\begin{align*}
%&D^-Q(\bx,\bu,t):=\left\{\bp=(p_
%0,p_1,p_2)\in\bbr^{n+m+1}~|~\bp\cdot \bq\le \partial^- Q(\bx,\bu,t;\bq)\right\},\\
%&D^+Q(\bx,\bu,t):=\left\{\bp=(p_
%0,p_1,p_2)\in\bbr^{n+m+1}~|~\bp\cdot \bq\ge \partial^+ Q (\bx,\bu,t;\bq)\right\}.
%\end{align*}
%Then, the definition of viscosity solution can be written in terms of sub- and super differentials \cite{Bardi97}: a uniformly continuous function $Q$ is a viscosity solution to \eqref{HJB-Q} if and only if 
%\[\begin{cases} p_2+p_0\cdot f(\bx,\bu)-M|p_1|+r(\bx,\bu)\ge0 \quad \forall\bp\in D^+Q(\bx,\bu,t),\\
%p_2+p_0\cdot f(\bx,\bu)-M|p_1|+r(\bx,\bu)\le0 \quad \forall\bp\in D^-Q(\bx,\bu,t).\end{cases}\]
The following optimality theorem is a direct application of the classical results in \cite[Theorem 3.39, Ch. 3]{Bardi97}.

\begin{theorem}\label{T2}
	%The trajectory-control pair $(x^*(s),u^*(s))$ is optimal if and only if 
	%\[\partial^-Q\left(x^*(s),u^*(s),s;f(x^*(s),u^*(s)),\dot{u}^*(s),1\right)=\min_{|a|\le M} \partial^-Q\left(x^*(s),u^*(s),s;f(x^*(s),u^*(s)),a,1\right)\]
	%for a.e. $s\ge t$. Furthermore, 
	Suppose that $f$ and $q$ are continuously differentiable. Then, the trajectory-control pair $(x^*,u^*)$ is optimal if and only if
	\begin{equation}\label{optimal_u}
	\dot{u}^*(s)=-M\frac{p_1}{|p_1|} \quad \forall p=(p_0,p_1,p_2)\in D^{\pm}Q(x^*(s),u^*(s),s),\quad \mbox{a.e.}~s\ge t.
	\end{equation}
\end{theorem}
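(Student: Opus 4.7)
My plan is to recast the problem in the augmented-state framework introduced in the proof of Theorem~\ref{T1} and then invoke \cite[Theorem~3.39, Ch.~3]{Bardi97} directly. Setting $z(t) := (x(t), u(t))$, $F(z, a) := (f(x, u), a)$, and $\tilde q(z) := q(x)$, the Lipschitz constraint $|\dot u|\le M$ a.e.\ becomes a standard compact, convex control constraint $|a|\le M$ on the augmented dynamics $\dot z = F(z, a)$, and the Q-function becomes the value function of a classical Bolza problem with running cost $r(z)$ and terminal cost $\tilde q(z)$, satisfying the HJB already identified in Theorem~\ref{T1}.

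Next, I would verify that the hypotheses of Bardi's optimality theorem hold in this augmented setting. Under (A1)--(A2) combined with the $C^1$ regularity of $f$ and $q$, the augmented field $F$ is $C^1$ in $z$ and affine (hence smooth) in $a$, the terminal cost $\tilde q$ is $C^1$, $r$ is Lipschitz, and the control set is compact and convex. Applying \cite[Theorem~3.39, Ch.~3]{Bardi97} to this problem then yields the optimality characterization: $(z^*, a^*)$ is optimal if and only if, for a.e.\ $s\ge t$ and every $p = (p_0, p_1, p_2)\in D^\pm Q(z^*(s), s)$,
\[
a^*(s)\in\arg\min_{|\alpha|\le M}\bigl\{p_0\cdot f(z^*(s)) + p_1\cdot \alpha + r(z^*(s))\bigr\}.
\]
Because the objective is affine in $\alpha$, its minimizer over $\{|\alpha|\le M\}$ is uniquely $\alpha^* = -Mp_1/|p_1|$ whenever $p_1 \neq 0$. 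Translating back via $\dot u^* = a^*$ recovers \eqref{optimal_u}.

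The main subtlety I anticipate is the degenerate case $p_1 = 0$, in which the formula $-Mp_1/|p_1|$ is ill-defined. The universal quantifier ``for every $p\in D^\pm Q$'' in the theorem statement absorbs this cleanly: at points of $D^\pm Q$ with $p_1 = 0$ the arg-min constraint carries no information about $\alpha$, so the effective content of \eqref{optimal_u} is imposed by those $p$'s with $p_1\neq 0$. Aside from this bookkeeping, the proof reduces to a direct application of the classical result to the augmented Bolza problem, with no novel estimates required beyond those already developed for Theorem~\ref{T1}.
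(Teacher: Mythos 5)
Your proposal is correct and follows essentially the same route as the paper: the paper's proof likewise passes to the augmented system $\dot z = F(z,a)$ from Theorem~\ref{T1} and obtains the characterization by directly applying \cite[Theorem 3.39, Ch.~3]{Bardi97}. Your verification of the hypotheses in the augmented setting and your handling of the degenerate case $p_1 = 0$ simply make explicit details that the paper leaves implicit.
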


\begin{proof}
	Note that the controlled system is equivalent to the extended dynamics $\dot{z}(t)=F(z(t),a(t))$ defined in the proof of Theorem \ref{T1}. Then, the assertions can be obtained by directly applying \cite[Theorem 3.39 in Chapter 3]{Bardi97} to this augmented system. 
\end{proof}

%\begin{remark}\label{R1}
	At a point $(\bx,\bu,t)$ where $Q$ is differentiable, the sub- and superdifferentials of $Q$ are identical to the classical derivative of $Q$. Thus, at such a point, we can construct the optimal control by using $\dot{u}^*=-M \frac{\nabla_\bu Q}{|\nabla_\bu Q|}$. At a point where $Q$ is not differentiable, one can choose any control given by \eqref{optimal_u}.
%\end{remark}
% As a corollary, we have the following optimality property:
%\begin{corollary}\label{Cor2.1}
%	For the optimal choice of control $u^*$ in \eqref{optimal_u}, we have $g_u(s;\bx,\bu,t)\ge g_{u^*}(s;\bx,\bu,t)$
%	for any admissible control $u\in \mathbb{U}_1$ with $u(t)=\bu$.
%\end{corollary}

\section{Q-Learning Using the HJB Equation}\label{sec:4}
In the infinite-horizon case,  we consider the following discounted cost function (with $\gamma > 0$):
\[
J_\bx(u)=\int_0^\infty e^{-\gamma t}r(x(t),u(t))\,\d t,\quad x(0)=\bx,
\]
and the Q-function is defined by 
\[
 Q(\bx,\bu):=\inf_{u\in \mathbb{U}_1} \left  \{\int_0^\infty e^{-\gamma t} r(x(t),u(t))\d t 
\: | \: x(0)=\bx,u(0)=\bu \right \}.
\]
 Again, using the dynamic programming principle, we can derive the following HJB equation:
\begin{equation}\label{HJB-Q-inf}
\gamma Q(\bx,\bu)-r(\bx,\bu)-\nabla_\bx Q\cdot f(\bx,\bu)+M|\nabla_\bu Q|=0.
\end{equation}
As in Theorem~\ref{T1}, we can show that the Q-function is the unique viscosity solution of the HJB equation~\eqref{HJB-Q-inf}.
A necessary and sufficient condition for optimality can be characterized in a way similar to Theorem~\ref{T2}.

We now discuss how the HJB equation~\eqref{HJB-Q-inf} can be used to design a Q-learning algorithm for estimating $Q(\bx,\bu)$ using sample trajectories. To provide the essential idea, we assume for a moment that $Q$ is differentiable. We then have
\begin{align*}
\frac{\d}{\d t}Q(x(t),u(t))&=\nabla_\bx Q\cdot f(x(t),u(t))+\nabla_\bu Q\cdot \dot{u}(t)\\
&=\gamma Q(x(t),u(t))-r(x(t),u(t))+M|\nabla_\bu Q|+\nabla_\bu Q\cdot \dot{u}(t).
\end{align*}
Suppose now that an optimal control is employed, i.e.,
$\dot{u} :=-M {\nabla_\bu Q}/{|\nabla_\bu Q|}$. 
Then, the time derivative of Q-function along the trajectory is further simplified as
\begin{equation}\label{ode}
\frac{\d}{\d t}Q(x(t),u(t))=\gamma Q(x(t),u(t))-r(x(t),u(t)).
\end{equation}
By integrating \eqref{ode} along the optimal trajectory-control pair, we obtain
\begin{equation}\label{bellman}
Q(\bx,\bu)=\int_0^t e^{-\gamma t} r(x(t),u(t))\,\d t +e^{-\gamma t} Q(x(t),u(t)),\quad \forall t\ge0.
\end{equation}
However, since the optimal Q-function and optimal trajectories are unknown {\it a priori}, we iteratively update the Q-function and the control $u$ with $\dot{u} =-M {\nabla_\bu Q}/{|\nabla_\bu Q|}$ by using sample data. 
The iteration is based on the equation \eqref{bellman}, which is the characterizing equation of the optimal Q-function. Specifically, for a given control $\dot{u}=a$, we obtain the system trajectory starting from randomly chosen $(\bx,\bu)$ for time interval $[0,h]$ with small $h>0$ and collect the sample data $\int_0^he^{-\gamma t}r(x(t),u(t))\,\d t$, $\bx':=x(h)$ and $\bu':=u(h)$. We then update a new estimate for Q-function by using \eqref{bellman} and sample data as $Q(\bx,\bu)\leftarrow \int_0^h e^{-\gamma t}r(x(t),u(t))\,\d t +e^{-\gamma h}Q(\bx',\bu')$ so that \eqref{bellman}  holds asymptotically. 

To handle high-dimensional state and control spaces, we propose a DQN-like algorithm by using
DNNs as a  function approximator.
Let $Q_\theta (\bx,\bu)$ denote the approximate Q-function 
parameterized by $\theta$. 
We update the network parameter $\theta$ by minimizing the mean squared error (MSE) loss between $Q_\theta(\bx,\bu)$ and the target $y=\int_0^h e^{-\gamma t}r(x(t),u(t))\,\d t +e^{-\gamma h}Q_\theta(\bx',\bu')$. To enhance the stability of learning procedure, we use $K$ sample points of $(\bx_i,\bu_i)$ for defining MSE loss $L(\theta):=\frac{1}{K}\sum_{i=1}^K (Q_\theta(\bx_i,\bu_i)-y_i)^2$
and introduce the target network parameter  $\theta^-$ in estimating target value $y_i$ as in DQN~\cite{Mnih15}. The target network is slowly updated as a weighted sum of $\theta$ and itself as in \cite{Lillicrap16}. The algorithm minimizes the error between the left and right-hand sides of \eqref{bellman} for each iteration, making the $Q_\theta$ asymptotically satisfies \eqref{bellman} as much as possible.  
The overall procedures are summarized in Algorithm~\ref{alg1}. 
Note that this algorithm does not need the knowledge of an explicit system model as in discrete-time Q-learning or DQN.  

\begin{algorithm2e}
	\caption{Continuous-Time Q-Learning}
	\label{alg1}
	Randomly initialize network parameter $\theta$ and initialize the target network parameter $\theta^-\leftarrow \theta$;\\
	Define the domain $\Omega:=[\bx_{\min},\bx_{\max}]^n\times [\bu_{\min},\bu_{\max}]^m$;\\
	\For{iter = 1 to $N$}{
		\uIf{$\nabla_\bu Q_\theta(\bx,\bu) \neq 0$}{
 Set  $a_\theta(\bx,\bu)$ as $-M\frac{\nabla_\bu Q_\theta(\bx,\bu)}{|\nabla_\bu Q_\theta(\bx,\bu)|}$;
		}
		\Else {Set  $a_\theta(\bx,\bu)$ as a random vector of length $M$;
}
Randomly choose $K$ samples of $(\bx_i,\bu_i)\in \Omega$;\\ 
		\For{i=1 to $K$}{
			Obtain discounted running cost $R_i:=\int_0^h e^{-\gamma t}r(x_i(t),u_i(t))\,\d t$ and the terminal point $(x_i(h),u_i(h))$ by using  $a_\theta$;\\
			Set the target as $y_i:=R_i+e^{-\gamma h}Q_{\theta^-}(x_i(h),u_i(h))$;
		}
		Update the network parameter $\theta$ by minimizing the MSE loss: $\frac{1}{K}\sum_{i}(Q_\theta(\bx_i,\bu_i)-y_i)^2$;\\
		Update the target network parameter as $\theta^-\leftarrow \tau \theta+(1-\tau)\theta^-$;
	}
\end{algorithm2e}

\section{Numerical Experiments}

We consider the following linear system with an exponentially discounted quadratic cost:
 \[\dot{x}(t)=Ax(t)+Bu(t),\quad J_\bx(u):=\int_0^\infty e^{-\gamma t} (|x(t)|^2+|u(t)|^2)\,\d t,\quad x(0)=\bx,\quad u(0)=\bu,\]
where $x(t)\in \bbr^n, u(t)\in\bbr^m, A\in \bbr^{n\times n}$ and $B\in \bbr^{n\times m}$. We restrict the control $u$ as a Lipschitz continuous function with Lipschitz constant 1. The parameters for simulation are chosen as $\bx_{\min}=\bu_{\min}=-1, \bx_{\max}=\bu_{\max}=1, \gamma=0.1, h=0.05, \tau=10^{-2}, K=10$ and $N=10^3$.
As the DNNs for approximating Q-functions, we use fully connected networks consisting of an input layer with $m+n$ nodes, two hidden layers with 128 nodes and an output layer with a single node. For the two hidden layers, we use ReLU activation function. For training the networks, we use Adam optimizer with a learning rate $10^{-3}$~\cite{Kingma2015}.

\subsection{One-dimensional problem}
As a toy example for sanity check, we first consider a one dimensional model, where $n=m=1$, $A=0$ and $B=1$. 
%Therefore, the problem is reduced to $\dot{x}=u$, minimizing the cost $\int_0^\infty e^{-\gamma t}(x(t)^2+u(t)^2)\,\d t$.
In order to measure the performance of control, we fix the initial state and control $(\bx,\bu)=(1,1)$ and we integrate the running cost over $[0, 10]$. Figure \ref{fig:1}\hyperref[fig:1] {(a)} shows the log of  costs  at each iteration of Algorithm \ref{alg1}. The solid line represents the learning curve averaged over five different trials and the shaded region represents the minimum and maximum of the cost over different trials. The running cost rapidly decreases as the network parameters $\theta$ is learned. {We also note that the variation over five different trials vanishes as the parameters are learned.}
Figure \ref{fig:1}\hyperref[fig:1] {(b)} shows the trajectory of $x(t)$, $u(t)$, and $a(t)$ generated by $\dot{u}(t)=a(t):=-\frac{\nabla_{\bm{u}} Q_{\theta}(x(t),u(t))}{|\nabla_{\bm{u}} Q_{\theta}(x(t),u(t))|}$ using the learned Q-function $Q_\theta(\bx,\bu)$ after $10^3$ iteration. 
The optimal control for this one dimensional problem is to drive both $x(t)$ and $u(t)$ as soon as possible to 0. Starting from $(\bx,\bu)=(1,1)$, this can be done by first driving the control $u(t)$ to negative value so that  $x(t)$ moves towards the origin, and then reducing the absolute value of $u(t)$ so that both $x(t)$ and $u(t)$ approaches 0 asymptotically. Such a behavior is observed in Figure \ref{fig:1}\hyperref[fig:1]{\textcolor{blue}{(b)}}. This confirms that the learned policy is near optimal.

\begin{figure}[tb]
	\centering
	\mbox{
		\subfigure[]{\includegraphics[height=1.75in]{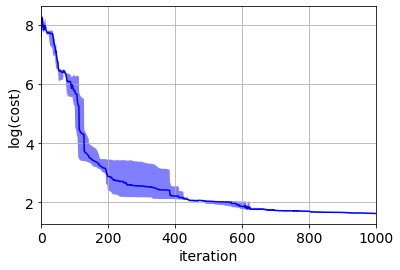}
			\label{fig:1a}}
		\subfigure[]{\includegraphics[height=1.75in]{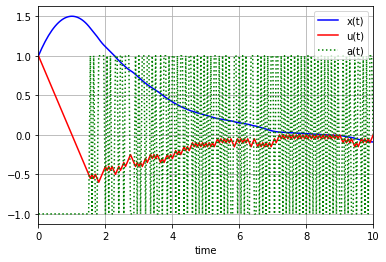}
			\label{fig:1b}}
	}
	\centering
%	\vspace{-0.1in}
	\caption{Numerical experiment result for $n=m=1$: (a) learning curve; (b) controlled trajectory of $x(t), u(t)$ and $a(t)$.}
	\label{fig:1}
%		\vspace{-0.1in}
\end{figure}

\subsection{10- and 20-dimensional problems}
For 10- and 20-dimensional problems, {where $n=m=10$ or $n=m=20$, respectively,} we set the coefficient matrices $A\in \bbr^{n\times n}$ and $B\in \bbr^{n\times m}$ such that each element of $A$ is randomly sampled from $U[0,1]$ and then multiplied by 0.1, and each element of $B$ is randomly sampled from the uniform distribution $U[0,1]$ and then multiplied by 5:
\[A_{ij}=0.1X_{ij},\quad B_{ij}=5Y_{ij},\quad X_{ij},Y_{ij}\sim U[0,1].\]

%\begin{figure}[h!]
%	\centering
%	\mbox{
%		\subfigure[Cost for each iteration]{\includegraphics[width=0.4\textwidth]{55}
%			\label{fig:2a}}
%		\subfigure[First component of $x(t)$, $u(t)$, $a(t)$ ]{\includegraphics[width=0.4\textwidth]{55_path}
%			\label{fig:2b}}
%	}
%	\centering\caption{Numerical result for linear model for $d=5$}
%	\label{fig:2}
%\end{figure}

\begin{figure}[ht]
		\vspace{-0.1in}
	\centering
	\mbox{
		\subfigure[]{\includegraphics[height=1.75in]{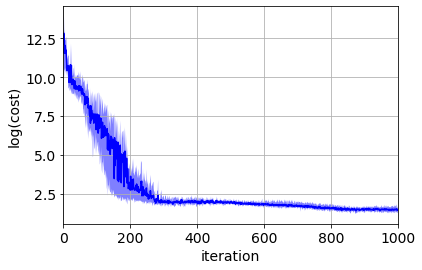}
			\label{fig:3a}}
		\subfigure[]{\includegraphics[height=1.75in]{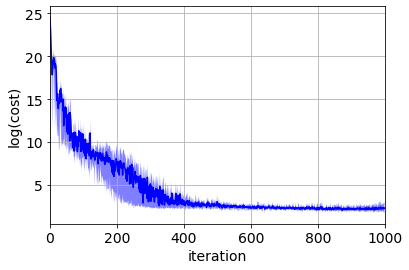}
			\label{fig:3b}}
	}
	\centering
		\vspace{-0.1in}
		\caption{Numerical experiment result: (a) learning curve for $n=m=10$; (b) learning curve for $n=m=20$.}
	\label{fig:3}
\end{figure}

We integrate the running cost from randomly chosen initial position and control $(\bx,\bu)\in[0,0.1]^{m+n}$.  
{The learning curves are shown in Figure \ref{fig:3}. As in the one-dimensional problem, the solid curves denote the average of the running costs over five trials and the shaded regions represent the minimum and maximum of the running costs.}
The results show that the cost decreases super-exponentially in both 10- and 20-dimensional problems. 
Note that the $y$-axis of Figure \ref{fig:3}\hyperref[fig:3]{\textcolor{blue}{(a)}} and Figure \ref{fig:3}\hyperref[fig:3]{\textcolor{blue}{(b)}} are plotted with log-scale, and thus the decay of cost is rapid. 
{Again, the results confirm that our continuous-time Q-learning algorithm presents the desired performance.}
%This confirms that it presents the desired performance as this behavior is aligned with the original control objective. 

%
%\begin{figure}
%	\includegraphics[width=0.8\textwidth]{5050}
%	\caption{Cost for each iteration, $d=50$}
%	\label{fig:5}
%\end{figure}
%
%When the dimension is relatively small ($d=5,10,20$), the cost after learning is almost near optimal cost, and the trajectory remains near the initial $(\bx,\bu)$ until $T=10$ (Figure \ref{fig:2},\ref{fig:3},\ref{fig:4}). However, when $d=50$, the cost after learning still far from the optimal one, but it is reduced to $10^{-4}$ of the initial cost. (Figure \ref{fig:5})\\
%
%However, if we further reduce the norm of initial data $(\bx,\bu)\in [0,0.01]^{100}$, then, the cost getting negligible compared to the initial cost, and the trajectory does not exponentially blow up and remains near the origin. (Figure \ref{fig:6})
%
%\begin{figure}[h!]
%	\centering
%	\mbox{
%		\subfigure[Cost for each iteration]{\includegraphics[width=0.5\textwidth]{5050_2}
%			\label{fig:6a}}
%		\subfigure[Trajectory of first component of $x(t)$, $u(t)$ and $a(t)$ ]{\includegraphics[width=0.5\textwidth]{5050_2_path}
%			\label{fig:6b}}
%	}
%	\centering\caption{Numerical result for linear model for $d=50$, small initial data}
%	\label{fig:6}
%\end{figure}
%

\section{Conclusion}
We introduced a Q-function for continuous-time deterministic optimal control problems with Lipschitz continuous control.
By using the dynamic programming principle, we derived 
the corresponding HJB equation and showed that its unique viscosity solution corresponds to the Q-function.
An optimality condition was also characterized in terms of the Q-function without the knowledge of system models. 
Using the HJB equation and the optimality condition, we construct 
a Q-learning algorithm and its DQN-like approximate version. 
 The simulation results show that the proposed Q-learning algorithm is fast and stable, and that
 the learned  controller presents a good performance, even in the 20-dimensional case. 
% Acknowledgments---Will not appear in anonymized version

\acks{This work was supported in part by the Creative-Pioneering Researchers Program through SNU, and the Basic Research Lab Program through the National Research Foundation of Korea funded by the MSIT(2020R1C1C1009766).}

\bibliography{reference}

\end{document}